\DeclareMathOperator{\Sp}{Sp}
\newcommand{\cO}{{\mathcal O}}
\newcommand{\cstar}{$\mathrm{C}^*$}
\newcommand{\bbT}{{\mathbb T}}
\newcommand{\bbN}{{\mathbb N}}
\newcommand{\bbC}{\mathbb C}
\DeclareMathOperator{\id}{id}
\newcommand{\cK}{\mathcal K}
\newtheorem{thm}{Theorem}
\newtheorem{theorem}[thm]{Theorem}
\newtheorem{lemma}[thm]{Lemma}
\theoremstyle{definition}
\newcounter{my_enumerate_counter}
\newcommand{\pushcounter}{\setcounter{my_enumerate_counter}{\value{enumi}}}
\newcommand{\popcounter}{\setcounter{enumi}{\value{my_enumerate_counter}}}
\newcommand{\bfs}{\mathbf s}
\DeclareMathOperator{\Ad}{Ad}
\newcommand{\bbF}{\mathbb F}
\DeclareMathOperator{\Ext}{Ext}
\def\utilde#1{{\baselineskip=0pt\vtop{\hbox{$#1$}\hbox{$\sim$}}}{}}
\title{The Calkin algebra is not countably homogeneous}
\author{Ilijas Farah} 
\address{Department of Mathematics and Statistics\\
York University\\
4700 Keele Street\\
North York, Ontario\\ Canada, M3J 1P3}
\email{ifarah@yorku.ca}
\urladdr{http://www.math.yorku.ca/$\sim$ifarah}
\author{Ilan Hirshberg} 
\address{Department of Mathematics\\
 Ben Gurion University of the Negev\\
  P.O.B. 653, Be'er\\
Sheva 84105, Israel}
\email{ilan@math.bgu.ac.il}
\urladdr{http://www.math.bgu.ac.il/~ilan/}
\date{\today}
\date{\today}
\begin{document}
\begin{abstract}
We show that the Calkin algebra is not countably homogeneous, in the sense of continuous model theory. We furthermore show that the connected component of the unitary group of the Calkin algebra is not countably homogeneous.
\end{abstract}
\maketitle

Motivated by their study of extensions of \cstar-algebras, Brown, Douglas and Fillmore asked 
whether the Calkin algebra has a $K$-theory reversing  automorphism and whether it has outer automorphisms at all
   (\cite[Remark~1.6 (ii)]{BrDoFi}). By \cite{PhWe:Calkin} and \cite{Fa:All} the answer to the latter question is independent from ZFC. In particular, since inner automorphisms fix $K$-theory, 
    a negative answer to the former question is relatively consistent with ZFC. 
   It is not known whether the existence of a $K$-theory reversing automorphism of the Calkin algebra is relatively consistent with ZFC. 
   All known automorphisms of the Calkin algebra (\cite{PhWe:Calkin} and \cite[\S 1]{Fa:All}) 
   act trivially on its $K$-theory, as they are implemented by a unitary on every separable subalgebra of the Calkin algebra.
   
A scenario for using Continuum Hypothesis 
to construct a $K$-theory reversing automorphism of 
the Calkin algebra on separable Hilbert space, denoted~$Q$, was sketched in  \cite[\S 6.3]{FaHaSh:Model2} and in \cite[\S 7.1]{Fa:Logic}. 
The following theorems demonstrate that this strategy 
does not work and suggest that  question of the existence of such automorphism 
is even more difficult than previously thought  (for terminology see below and  \cite{FaHaSh:Model2} or~\cite{FaHa:Countable}). 

\begin{theorem} \label{T1} The Calkin algebra $Q$ is not countably homogeneous, 
and this is witnessed by a quantifier-free type. 
\end{theorem}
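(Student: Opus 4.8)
The plan is to produce a quantifier-free type $p(\bar x)$ over $\emptyset$, in countably many variables, together with two realizations $\bar a,\bar b$ of $p$ in $Q$ which have the same complete type but are not conjugate by any automorphism of $Q$. Since automorphisms preserve the family of types realized over a fixed tuple, it is enough to arrange that some type over $\bar a$ is realized in $Q$ while the corresponding type over $\bar b$ — under the canonical identification of the two type spaces coming from $\mathrm{tp}(\bar a)=\mathrm{tp}(\bar b)$ — is omitted. One should first see why the obvious $K$-theoretic attempt cannot by itself do this: two normal elements with the same spectrum realize the same quantifier-free type, and if the index data $[\,a-\lambda\,]\in K_1(Q)$ ($\lambda\notin\mathrm{sp}(a)$) they carry differ by more than an overall sign, then no automorphism conjugates one to the other, since an automorphism acts on $K_1(Q)\cong\mathbb Z$ by $\pm1$. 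However, that $\pm1$-ambiguous datum is already determined by the complete type (divisibility of the relevant $K_1$-classes being detected in the expansion $(Q,a)$, via Voiculescu's theorem), so this route either fails to give two realizations of the same \emph{complete} type, or collapses into the open problem about $K$-theory reversing automorphisms. Hence the construction has to live inside the failure of full countable saturation of $Q$: $Q$ is countably degree-$1$ saturated by \cite{FaHa:Countable} but not $\aleph_1$-saturated, and the point is to make the omission of a type \emph{inhomogeneous}.

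Concretely, I would work with a gap in the Boolean algebra of projections of $\ell^\infty(\mathbb N)/c_0$, realized in $Q$ through the image of the diagonal masa, these gaps being the standard source of consistent-but-omitted types in corona algebras. Build $\bar b$ to enumerate the generators of a separable subalgebra $B\subseteq Q$ that ``sees the gap'': a prescribed $1$-type over $\bar b$, demanding a projection that interpolates the two halves of the gap and commutes with the relevant data, is finitely satisfiable in $Q$ but omitted. Build $\bar a$ along a literally identical finite-stage recipe, but kept inside a relative commutant large enough that the analogous $1$-type \emph{is} realized. The heart of the matter is to interleave the two constructions in a back-and-forth which, at each finite stage and using countable degree-$1$ saturation of $Q$, simultaneously meets all quantifier-free requirements on the two sides; the coded gap is then approached only in the limit, each finite approximation being first-order indistinguishable from a harmless one, so that $\mathrm{tp}(\bar a)=\mathrm{tp}(\bar b)$ in the end while the realization patterns differ. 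With $p$ the common quantifier-free type of $\bar a$ and $\bar b$, this proves Theorem~\ref{T1}; the refinement for the connected component follows by adapting the same construction to unitaries in $U_0(Q)$.

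The main obstacle is precisely this last requirement — ``same complete type, different realization pattern'' — because the gap-coding is a priori first-order visible: the diagonal projections and the finite Boolean combinations entering the type are definable from $\bar a$ and from $\bar b$, so the recipe must be designed so that no finite fragment of it separates the two sides and only the limiting, non-first-order behaviour does. This forces delicate bookkeeping in the back-and-forth, and it is exactly here that degree-$1$ saturation is indispensable, being the only tool that guarantees all the finitary quantifier-free demands on both sides can always be satisfied. A secondary difficulty is to show that the gap combinatorics, once translated from $\ell^\infty/c_0$ into the $C^*$-algebra language, genuinely yields a type that is finitely satisfiable yet omitted \emph{in $Q$}, rather than being accidentally filled by the extra room $Q$ affords; this calls for a direct ZFC argument for the omission, so that the failure of countable homogeneity holds in ZFC and not merely under the Continuum Hypothesis.
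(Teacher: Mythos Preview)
Your proposal is not a proof but a plan whose hardest step is left open, and the tool you intend to use cannot close it. You want $\bar a$ and $\bar b$ to have the same \emph{complete} type in $Q$, yet you propose to build them by a back-and-forth that at each finite stage appeals only to countable degree-$1$ saturation of $Q$. Degree-$1$ saturation lets you realize quantifier-free (more precisely, degree-$1$) conditions, so the back-and-forth can at best match the quantifier-free types of the finite stages; nothing in the argument forces $\mathrm{tp}(\bar a)=\mathrm{tp}(\bar b)$ in the full theory, and $Q$ has no quantifier-elimination or model-completeness result that would upgrade this for free. You yourself flag that the gap-coding is ``a priori first-order visible,'' and indeed whether a system of projections in the image of the atomic masa can be interpolated is generally detectable from higher-quantifier formulas over those projections; your sketch gives no mechanism to hide this in the limit. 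The secondary difficulty you mention---that $Q$ might fill a gap that $\ell^\infty/c_0$ omits---is also real and unaddressed.

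For comparison, the paper's argument avoids any back-and-forth by a one-line approximation trick. Take the CAR algebra $A=M_{2^\infty}$, singly generated by some $g$, and two unital embeddings $\pi_0,\pi\colon A\to Q$ with $\pi_0$ split and $\pi$ not. The generator $g$ is a norm-limit of elements $a_n\in M_{2^n}(\mathbb C)\subseteq A$; since $\Ext^w(M_{2^n}(\mathbb C))$ is trivial, $\pi(a_n)$ and $\pi_0(a_n)$ are unitarily conjugate in $Q$, hence have the same full type, and by continuity of interpretation so do $\pi(g)$ and $\pi_0(g)$. The quantifier-free $2$-type over $\pi(g)$ asserting that $x_1,x_2$ are Cuntz isometries commuting with $\pi(g)$ is realized iff $\mathcal O_2$ embeds unitally in $\pi(A)'\cap Q$, which by an $\Ext$ argument happens iff $\pi$ is split. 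This gives two single elements with identical full type but different quantifier-free realization patterns---exactly what your plan was aiming for, obtained without any saturation or gap combinatorics.
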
 

\begin{theorem} \label{T2} The group $U_0(Q)$ of Fredholm index zero unitaries in $Q$ is not countably 
homogeneous. 
\end{theorem}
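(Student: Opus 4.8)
The plan is to produce, in ZFC, two pairs of elements of $U_0(Q)$ realizing the same quantifier-free type but lying in different orbits of the automorphism group of $U_0(Q)$ (viewed as a metric group, i.e.\ as a metric structure in the sense of continuous logic). Note that Theorem~\ref{T2} does not follow formally from Theorem~\ref{T1}, since every single unitary of $U_0(Q)$ has Fredholm index $0$, so the index obstruction driving Theorem~\ref{T1} is invisible to one unitary of $U_0(Q)$. The idea is to make that obstruction reappear \emph{relatively}, inside a corner $pQp\cong Q$, and to keep track of the corner by means of a self-adjoint unitary.

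Concretely, I would fix a projection $p\in Q$ with $p$ and $1-p$ both infinite, so that $pQp\cong Q\cong(1-p)Q(1-p)$, and set $e:=1-2p$; this is a self-adjoint unitary, and since it lifts to a self-adjoint unitary on $H$ it lies in $U_0(Q)$. For each $n\ge1$, using $pQp\cong Q$, I would choose $v_n\in U(pQp)$ of Fredholm index $n$ and with full spectrum $\mathbb{T}$ (say a copy of $\pi(S)^{-n}$, $S$ the unilateral shift), and similarly $w_n\in U((1-p)Q(1-p))$ of index $-n$ with full spectrum, and put $u_n:=v_n+w_n$. Then $u_n\in U_0(Q)$ has index $0$ and commutes with $e$, and since $pu_np=v_n$ and $(1-p)u_n(1-p)=w_n$ both have spectrum $\mathbb{T}$, we get $C^*(e,u_n)=C^*(p,u_n)\cong C(\mathbb{T})\oplus C(\mathbb{T})$, with $p\leftrightarrow(1,0)$ and $u_n\leftrightarrow(\zeta,\zeta)$, $\zeta$ the coordinate function. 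In particular the closed subgroup $\langle e,u_n\rangle$ of $U_0(Q)$, with the operator-norm metric and the two marked generators, is for every $n\ge1$ isomorphic to $\mathbb{Z}/2\times\mathbb{Z}$ with the metric assigning distance $2$ to each pair of distinct elements — one checks this using $\|u_n^k-1\|=\sup_{z\in\mathbb{T}}|z^k-1|=2$ for $k\ne0$ together with the analogous computation with $e$ inserted — and $e,u_n$ correspond to the distinguished generators. Hence all the pairs $(e,u_n)$, $n\ge1$, realize one and the same quantifier-free $2$-type.

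Next comes the invariant. The centralizer of $e$ in $U_0(Q)$ is $Z(e)=\{v+w : v\in U(pQp),\ w\in U((1-p)Q(1-p)),\ \mathrm{ind}(v)+\mathrm{ind}(w)=0\}$. Since $K_1(Q)\cong\mathbb{Z}$ via the Fredholm index and each Fredholm-index level of $U(Q)$ is connected, the assignment $v+w\mapsto\mathrm{ind}(v)$ identifies $\pi_0(Z(e))$ with $\mathbb{Z}$, and $u_n$ lies in the component labelled $n$. If now $\phi$ is an automorphism of $U_0(Q)$ with $\phi(e)=e$, then $\phi$ restricts to a homeomorphic group automorphism of $Z(e)$, hence induces an automorphism of $\pi_0(Z(e))\cong\mathbb{Z}$, which can only be $\pm\mathrm{id}$; thus $\phi(u_n)$ lies in the component $\pm n$ and cannot equal $u_m$ unless $m=\pm n$. (To cover \emph{arbitrary} group automorphisms, not a priori continuous, one argues instead that $[Z(e),Z(e)]$ equals the identity component $U_0(pQp)\times U_0((1-p)Q(1-p))$ of $Z(e)$: the inclusion $\subseteq$ is additivity of the index, and $\supseteq$ uses that $U_0(Q)$, hence $U_0(pQp)$, is a perfect group, being a homomorphic image of the perfect group $U(\mathcal{B}(H))$; so $\pi_0(Z(e))$ is the abelianization of $Z(e)$ and is preserved by every group automorphism.) Taking $n=1$ and $n=2$, the pairs $(e,u_1)$ and $(e,u_2)$ realize the same quantifier-free type but are in different $\mathrm{Aut}(U_0(Q))$-orbits, so $U_0(Q)$ is not countably homogeneous.

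I expect the only real obstacle to be the middle point just used: verifying carefully that the relative Fredholm index $|\mathrm{ind}(pup)|$ of a commuting pair $(1-2p,u)$ is an invariant of the automorphism in whatever precise sense the continuous-model-theoretic notion of homogeneity for $U_0(Q)$ requires. For automorphisms of $U_0(Q)$ as a metric structure this is just the remark that such maps are isometric, hence continuous, hence permute connected components; to handle abstract group automorphisms as well one needs, in addition, the connectedness of the Fredholm-index levels of $U(Q)$ and the perfectness of $U_0(Q)$. The remaining verifications — that $e\in U_0(Q)$, that $\langle e,u_n\rangle$ has the stated metric-group structure, and that $C^*(e,u_n)\cong C(\mathbb{T})\oplus C(\mathbb{T})$ independently of $n$ — are routine.
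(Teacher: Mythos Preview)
Your argument has a genuine gap. You show that the pairs $(e,u_1)$ and $(e,u_2)$ have the same \emph{quantifier-free} type in $U_0(Q)$, but countable homogeneity concerns \emph{full} types: one must exhibit tuples with the same full type such that one side admits an extension by some $c$ whose type over the tuple cannot be matched on the other side. Showing that two tuples with the same quantifier-free type lie in different automorphism orbits says nothing about countable homogeneity if their full types already differ, since then the hypothesis of homogeneity is simply not met. And here the full types do differ. Consider the metric-group formula
\[
\psi(x_1,x_2)=\inf_y\max\bigl(d(x_1y,yx_1),\,d(y^2,x_2)\bigr).
\]
With your choices one has $u_2=u_1^2$, so $y=u_1$ witnesses $\psi(e,u_2)=0$. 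On the other hand $\psi(e,u_1)>0$: if $y\in U_0(Q)$ has $\|[e,y]\|$ and $\|y^2-u_1\|$ both small, then $pyp$ is close to a genuine unitary $\tilde a$ in $pQp$ with $\tilde a^{\,2}$ close to $v_1$; once $\|\tilde a^{\,2}-v_1\|<2$ their Fredholm indices in $pQp$ agree, forcing the index of $v_1$ to be even, a contradiction. Replacing squares by $k$-th powers shows that no two of the pairs $(e,u_n)$, $n\geq 1$, share a full type. In effect, the very invariant you use to separate orbits --- the class of $u$ in $\pi_0(Z(e))\cong\bbZ$ --- is already visible to first-order formulas with one quantifier.

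The paper circumvents this by choosing parameters whose full type is \emph{provably} independent of the relevant choice: a dense sequence $(g_j)$ in $U(M_{2^\infty})$, pushed forward by an arbitrary unital $\pi\colon M_{2^\infty}\to Q$. Each $\pi(g_j)$ is a limit of unitaries lying in matrix subalgebras, all of whose extensions are equivalent, so the full type of the sequence $(\pi(g_j))$ in $U_0(Q)$ does not depend on $\pi$. One then writes down a type over these parameters, purely in the metric-group language (via a presentation of $\cO_2$ by unitaries), which is realized exactly when $\cO_2$ embeds unitally in $\pi(M_{2^\infty})'\cap Q$, i.e.\ exactly when $\pi$ is split; both split and non-split $\pi$ exist.
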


Our theorems give negative answers to \cite[Questions~5.2 and 5.7]{FaHa:Countable}
and a novel obstruction to countable saturation of $Q$. 
In   \cite[Question~5.1]{FaHa:Countable} it was asked whether all obstructions to (quantifier-free) countable  saturation of $Q$ are 
of $K$-theoretic nature. The obstruction given in our results is finer than the Fredholm index, but it is 
 $K$-homological and therefore ultimately $K$-theoretical.
 In addition, the obstruction given in Theorem~\ref{T1} is quantifier-free and   one
given in  Theorem~\ref{T2} appears to have little to do with the Fredholm index.   
It should be noted that one of the key ideas, using $\Ext(M_{2^\infty})$, is due to N.C. Phillips, and it was already 
used in the proof of  \cite[Proposition~4.2]{FaHa:Countable}.

Model theory of \cstar-algebras and their unitary groups is based on  \cite{BYBHU} 
and described in \cite[\S 2.3.1 and \S 2.3.3, respectively]{FaHaSh:Model2}. 
\emph{Formulas} in logic of metric structures are defined recursively.   
 In case of \cstar-algebras, atomic formulas are expressions of the form $\|t(\bar x)\|$
 where $t$ is a noncommutative *-polynomial in variables $\bar x=(x_1, \dots, x_n)$. 
 The set of all formulas is the smallest set $\bbF$ containing all atomic formulas 
such  that (i) for every~$n$, all continuous $f\colon [0,\infty)^n\to [0,\infty)$ and all $\phi_1, \dots, \phi_n$ in $\bbF$
the expression $f(\phi_1, \dots, \phi_n)$ belongs to  $\bbF$ and 
(ii) if $\phi\in \bbF$,  $m\geq 1$, and $x$ is a variable symbol  than both  $\sup_{\|x\|\leq m} \phi$ and $\inf_{\|x\|\leq m} \phi$  belong to $\bbF$
 (see  \cite[\S 2.4]{FaHaSh:Model2}). 
If $\phi(x_1, \dots, x_n)$ is a formula, $A$ is a \cstar-algebra, and $a_1, \dots, a_n$ are elements of 
$A$, then the \emph{interpretation} $\phi(a_1, \dots, a_n)^A$ is obviously defined by recursion. 
 A \emph{condition} is any expression of the form $\phi\leq r$  for formula $\phi$ and $r\geq 0$
 and \emph{type} is a set of conditions
 (\cite[\S 4.3]{FaHaSh:Model2}).   As every expression of the form $\phi=r$ is equivalent to the condition $\max(\phi, r)\leq r$
 and every expression of the form $\phi\geq r$ is equivalent  to the condition $\min(0,r-\phi)\leq 0$, we shall freely refer to such expressions as conditions. 
For $n\geq 1$, the \emph{$n$-type} is a type such that free variables occurring in its conditions are included in $\{x_1, \dots, x_n\}$. 
It is important that  each free variable $x$  is associated with a domain of quantification, which in our case
reduces to asserting that $\|x\|\leq m$ for some fixed $m$.
  
Given a \cstar-algebra $A$ and sequence $\bar a=(a_j: n\in \bbN)$ in $A$, the \emph{type of $\bar a$ in $A$} is
the set of all conditions 
$\phi(x_1, \dots, x_m)\leq r$ 
such that $\phi(a_1, \dots, a_m)^A\leq r$. 
A structure $C$ is said to be \emph{countably homogeneous} if for every two sequences
 $\bar a=(a_n:n\in \bbN)$ and $\bar b=(b_n:n\in \bbN)$ with the same type
 and every $c\in C$ there exists $d\in C$ such that  $(\bar a, c)$ and $(\bar b, d)$ have the same type. 
 Our proof of the failure of countable homogeneity in $Q$  will show that 
sequences $\bar a$ and $\bar b$ can be chosen to be finite. 

We recall the definitions the semigroups $\Ext(A)$ and $\Ext^w(A)$. 
 If $A$ is a unital \cstar-algebra, we consider injective unital *-homomorphisms $\pi \colon A \to Q$ (such a *-homomorphism is the Busby invariant of an extension of $A$ by the compacts). By slight abuse of notation, we call such a *-homomorphism an extension. Two extensions $\pi_j\colon A\to Q$, for $j=1,2$ are said to be weakly equivalent if there is a unitary $u$ in the Calkin algebra
such that $\pi_1=\Ad u\circ \pi_2$. If $u$ above is furthermore required to have Fredholm index zero then we say that these extensions  are equivalent. The set of such *-homomorphisms is equipped with the direct sum operation (using implicitly the fact that $M_2(Q) \cong Q$), and the set of equivalence relations forms a semigroup, denoted  $\Ext^w(A)$ or $\Ext(A)$, respectively.
They correspond to semigroups 
$\Ext^u_*(A,\cK)$ where $*=s,w$ and $\cK$ denotes the algebra of compact operators on separable Hilbert space
as defined in \cite[Definition~15.6.3, Proposition~15.6.2 and \S 15.4 (2), (3)]{blackadar1998k}    
(see also    \cite{BrDoFi,higson2000analytic,Arv:Notes}).

If $A$ is a simple unital \cstar-subalgebra of $Q$ and $p\in A'\cap Q$  is a 
nonzero projection, then   $a\mapsto pap$ is an injective unital *-homomorphism from $A$ into $pQp\cong Q$. The isomorphism between $pQp$ and $Q$ used here is chosen by picking an isometry $v$ such that $vv^* = p$, and the map $Q \to pQp$ is given by $x \mapsto vxv^*$. The choice of $v$ is unique up to multiplication by a unitary, and therefore it does not affect the $\Ext^w$ class. 
(The choice of $v$ can affect the $\Ext$ class, and therefore the choice of $p$ only determines the weak equivalence class.)    
Therefore projections in $A' \cap Q$ 
determine $\Ext^w$-classses of 
 unital extensions of $A$, after identifying $pQp$ with $Q$ in the manner we described.

A subalgebra $A$ of $Q$ is \emph{split} if 
there is a unital *-homomorphism from $\Phi\colon A\to B(H)$ such that $\pi\circ\Phi=\id_A$. 
The following lemma   is related 
to   \cite[Lemma~5.1.2 and Lemma~5.1.2]{higson2000analytic}.

\begin{lemma} \label{L1} Let $A$ be a simple separable unital subalgebra of $Q$ and let $p$ and $q$ be projections in $A'\cap Q$. 
Then $p$ and $q$ are Murray-von Neumann equivalent in $A'\cap Q$ if and only if the extensions of $A$ corresponding to 
$p$ and $q$ are weakly equivalent. 
\end{lemma}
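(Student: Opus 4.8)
The plan is to directly translate Murray--von Neumann equivalence of $p$ and $q$ in $A'\cap Q$ into a unitary in $Q$ that intertwines the two compressions, and conversely.

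For the forward direction, suppose $w\in A'\cap Q$ is a partial isometry with $w^*w=p$ and $ww^*=q$. Fix isometries $v_p,v_q\in Q$ with $v_pv_p^*=p$, $v_qv_q^*=q$, which we use to identify $pQp\cong Q$ and $qQq\cong Q$ as in the discussion preceding the lemma; the extensions corresponding to $p$ and $q$ are then $\pi_p(a)=v_p^*(pap)v_p=v_p^* a v_p$ and $\pi_q(a)=v_q^* a v_q$ (using $pa=ap$, etc.). Set $u=v_q^* w v_p\in Q$. Since $w^*w=p=v_pv_p^*$ and $ww^*=q=v_qv_q^*$, one checks $u^*u=v_p^* w^* w v_p=v_p^* p v_p=1$ and similarly $uu^*=1$, so $u$ is a unitary in $Q$. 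For $a\in A$, using that $w$ commutes with $a$ and $w=qwp$, we compute
\[
u\,\pi_p(a)\,u^* = v_q^* w v_p \cdot v_p^* a v_p \cdot v_p^* w^* v_q = v_q^* w p a p w^* v_q = v_q^* (w a w^*) v_q = v_q^* (a w w^*) v_q = v_q^* (q a q) v_q = \pi_q(a),
\]
so $\Ad u\circ\pi_p=\pi_q$ and the two extensions are weakly equivalent.

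For the converse, suppose $u\in Q$ is a unitary with $\Ad u\circ\pi_p=\pi_q$, i.e.\ $u v_p^* a v_p u^* = v_q^* a v_q$ for all $a\in A$. Rearranging, $(v_q u v_p^*)\, a = a\, (v_q u v_p^*)$ holds after compressing by $p$ and $q$ on the appropriate sides; more precisely put $w=v_q u v_p^*\in Q$. Then $w^*w=v_p u^* v_q^* v_q u v_p^*=v_p u^* u v_p^* = v_p v_p^*=p$ and $ww^*=v_q u v_p^* v_p u^* v_q^*=v_q v_q^*=q$, so $w$ is a partial isometry with initial projection $p$ and final projection $q$. It remains to see $w\in A'\cap Q$: for $a\in A$ we have $wa = waw^*w = (w a w^*)\,w$, and $waw^* = v_q u v_p^* a v_p u^* v_q^* = v_q(v_q^* a v_q)v_q^* = q a q = a q = aww^*$, hence $wa = aww^*w = aw$. (Here we used $p,q\in A'\cap Q$.) Thus $w$ witnesses Murray--von Neumann equivalence of $p$ and $q$ in $A'\cap Q$.

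The computations are routine once the bookkeeping with the isometries $v_p,v_q$ is set up; the only genuine point requiring care is the claim that the partial isometry lies in the \emph{relative} commutant $A'\cap Q$ (rather than merely conjugating one extension to the other), and this is exactly where one must use that $p$ and $q$ themselves commute with $A$ so that the compressions $a\mapsto pap$, $a\mapsto qaq$ are honest unital $*$-homomorphisms whose ranges are being intertwined. I do not expect the separability or simplicity of $A$ to be needed for this particular statement—they are presumably in the hypothesis for uniformity with how the lemma is used later.
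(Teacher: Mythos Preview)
Your proof is correct and follows essentially the same route as the paper's: the paper dismisses the forward direction as ``trivially true'' from the identification conventions, and for the converse it directly posits a partial isometry $v$ with $v^*v=p$, $vv^*=q$, $vpapv^*=qaq$ and then performs exactly your commutation check $vav^*=avv^*\Rightarrow va=av$. The only cosmetic difference is that you carry the identifying isometries $v_p,v_q$ explicitly throughout, whereas the paper suppresses them; your remark that separability plays no role here is correct, though simplicity is what guarantees the compressions $a\mapsto pap$ are injective and hence bona fide extensions.
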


\begin{proof} The direct implication is trivially true because 
 of our convention that for nonzero $p\in A'\cap Q$ we identify $pQp$ with $Q$ 
 and $p$  with unital extension $a\mapsto pap$ 
 of $A$.  
 We now prove the converse implication. If the extensions corresponding to $p$ and $q$ are weakly equivalent, then 
there exists a partial isometry $v$ in $Q$ such that $v^*v=p$, $vv^*=q$, 
and $vpapv^*=qaq$ for all $a\in A$. It will suffice to check that $v\in A'\cap Q$. 
Fix $a\in A$. We have $vav^*=vv^*avv^*$, and since $vv^*\in A'\cap Q$, 
we have $vv^*avv^*=avv^*$ and therefore $vav^*=avv^*$. Multiplying by $v$ on the right hand side and using $v^*v\in A'\cap Q$
we have $vv^*va=avv^*v$. But since $v$ is a partial isometry we have $vv^*v=v$, thus showing that $va=av$. 
\end{proof} 

If $A$ is a  separable, unital and nuclear \cstar-algebra then $\Ext^w(A)$ is a group  (\cite[p. 586]{choi1976completely}). This implies that every extension of $A$ corresponds to some $p\in A'\cap Q$. To see that, if $\pi_1 \colon A \to Q$ is any given extension, then there exists an extension $\pi_2 \colon A \to Q$ such that $\pi_1 \oplus \pi_2$ is weakly equivalent to $\id_A$. The extension $\pi_1$ corresponds to the projection $\left ( \begin{matrix} 1 & 0 \\ 0 & 0 \end{matrix} \right ) \in (\pi_1 \oplus \pi_2)(A)' \cap M_2(Q)$. When we identify $\pi_1 \oplus \pi_2$ with $\id_A$ via a unitary and an isomorphism $M_2 (Q) \cong Q$, the above matrix is identified with a projection $p \in A' \cap Q$ as required.


\begin{lemma}\label{L.O2} Let $A$ be a separable unital  subalgebra of $Q$ such that $\Ext^w(A)$ is a group. 
Then the Cuntz algebra $\cO_2$ unitally embeds into $A'\cap Q$ if and only if
$A$ is split. 
\end{lemma}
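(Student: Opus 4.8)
The plan is to phrase both implications in terms of the inclusion $\id_A\colon A\hookrightarrow Q$ viewed as an extension of $A$, using Voiculescu's theorem and the behaviour of the class $[\id_A]$ in $\Ext^w(A)$. The forward direction is a stabilisation--plus--absorption argument, while the converse extracts the vanishing of $[\id_A]$ from the presence of $\cO_2$ and then upgrades this to a genuine splitting.

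\emph{If $A$ is split, then $\cO_2$ embeds unitally into $A'\cap Q$.} Let $\Phi\colon A\to B(H)$ be unital with $\pi\circ\Phi=\id_A$. Since $\pi\circ\Phi$ is injective, $\Phi$ is injective and $\Phi(A)\cap\cK=0$, so $\pi\circ\Phi$ is a faithful essential trivial extension. Put $\Phi^{(\infty)}:=\Phi\otimes 1_{\ell^2(\bbN)}\colon A\to B(H\otimes\ell^2(\bbN))$; after fixing an isomorphism $B(H\otimes\ell^2(\bbN))\cong B(H)$ this is again a faithful essential trivial extension. By uniqueness of the trivial extension (Voiculescu's theorem; see \cite{higson2000analytic} or \cite{Arv:Notes}) there is a unitary $w\in Q$ with $w\,\pi(\Phi^{(\infty)}(a))\,w^{*}=\pi(\Phi(a))=a$ for all $a\in A$. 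Now $\Phi^{(\infty)}(A)=\Phi(A)\otimes 1_{\ell^2(\bbN)}$ commutes with the unital copy $1_H\otimes\cO_2$ of $\cO_2$ in $B(H\otimes\ell^2(\bbN))$, and $\pi$ is isometric on $1_H\otimes\cO_2$ because $1_H\otimes x$ is compact only for $x=0$. Hence $w^{*}\,\pi(1_H\otimes\cO_2)\,w$ is a unital copy of $\cO_2$ commuting with $w^{*}\,\pi(\Phi^{(\infty)}(A))\,w=A$. (This direction uses only that $A$ is separable.)

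\emph{If $\cO_2$ embeds unitally into $A'\cap Q$, then $A$ is split.} Let $s_1,s_2\in A'\cap Q$ be the images of the canonical generators, so $s_i^{*}s_j=\delta_{ij}$ and $s_1s_1^{*}+s_2s_2^{*}=1$. The row $V:=\begin{pmatrix}s_1&s_2\end{pmatrix}$ satisfies $VV^{*}=1$ and $V^{*}V=1_{M_2(Q)}$, so $x\mapsto VxV^{*}$ is a $*$-isomorphism $M_2(Q)\to Q$; computing the sum in $\Ext^w(A)$ with this identification and using that $s_1,s_2$ commute with $A$ gives $V(\id_A\oplus\id_A)(a)V^{*}=s_1as_1^{*}+s_2as_2^{*}=a$, hence $[\id_A]+[\id_A]=[\id_A]$ in $\Ext^w(A)$. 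As $\Ext^w(A)$ is a group, $[\id_A]=0$, and since the neutral element of $\Ext^w(A)$ is the class of trivial extensions (by the absorption and uniqueness used above), $\id_A$ is weakly equivalent to a trivial extension: there are a unital $*$-homomorphism $\Psi\colon A\to B(H)$ and a unitary $u\in Q$ with $a=u\,\pi(\Psi(a))\,u^{*}$ for all $a$. Lift $u$ to a Fredholm partial isometry $\tilde u\in B(H)$; if its index is $0$, perturb $\tilde u$ by a finite--rank operator to a unitary $U$ with $\pi(U)=u$, and then $\Phi:=\Ad U\circ\Psi$ is a unital $*$-homomorphism with $\pi\circ\Phi=\id_A$. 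In general $\tilde u$ may have nonzero index, and the $\cO_2$ is used to eliminate it: the commuting unital copies of $A$ and $\cO_2$ in $Q$ and nuclearity of $\cO_2$ yield a unital injective $*$-homomorphism $\beta\colon A\otimes\cO_2\to Q$ with $\beta\restriction(A\otimes 1)=\id_A$, and since $A'\cap Q$ already contains a unital copy of $\cO_2\otimes\cO_2\cong\cO_2$ there is a unital copy of $\cO_2$ in $\beta(A\otimes\cO_2)'\cap Q$; because $K_{*}(\cO_2)=0$ the Künneth theorem gives $K_{*}(A\otimes\cO_2)=0$, so $\beta$ carries no Fredholm--index obstruction, and running the argument of this paragraph for $\beta$ shows $\beta$ is itself a trivial extension. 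Restricting a $*$-homomorphism lift of $\beta$ to $A\otimes 1$ then splits $\id_A$.

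The main obstacle is precisely this last step: passing from ``$\id_A$ is weakly equivalent to a trivial extension'' to ``$\id_A$ is a trivial extension''. Everything before it is Voiculescu's absorption theorem together with the elementary identity $2[\id_A]=[\id_A]$ forced by $\cO_2$; the delicate point is that weak equivalence is blind to the Fredholm index, and it is the presence of $\cO_2$ in the relative commutant --- equivalently, the passage to the $K$-theoretically trivial algebra $A\otimes\cO_2$, for which $\Ext$ and $\Ext^w$ agree --- that makes that index disappear. (When $A$ is nuclear this upgrade is immediate from the UCT, since then $\Ext^w(A\otimes\cO_2)$ is a group with $K_{*}(A\otimes\cO_2)=0$.)
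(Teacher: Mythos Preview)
Your forward direction is correct and slightly more hands--on than the paper's. The paper uses Voiculescu's theorem to write $\id_A$ as $\id_A\oplus\id_A$ up to equivalence, obtaining a projection $p\in A'\cap Q$ so that $1,p,1-p$ all correspond to split extensions, and then invokes Lemma~\ref{L1} to conclude these three projections are Murray--von~Neumann equivalent in $A'\cap Q$; you instead exhibit $\cO_2$ explicitly as $1\otimes\cO_2$ in the commutant of the infinite amplification $\Phi^{(\infty)}$ and transport it by the Voiculescu unitary. Both are short and valid.

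For the converse you and the paper agree on the essential step: the Cuntz isometries in $A'\cap Q$ give $[\id_A]+[\id_A]=[\id_A]$ in $\Ext^w(A)$, hence $[\id_A]=0$. The paper stops here and declares $A$ split. You correctly observe that $[\id_A]=0$ in $\Ext^w(A)$ literally says only that $\id_A$ is \emph{weakly} equivalent to a trivial extension, i.e.\ $\id_A=\Ad u\circ(\pi\Psi)$ with $u\in U(Q)$ of possibly nonzero Fredholm index, and that a \emph{unital} lift is not automatic from this. Your proposed repair, however, does not work. The passage to $A\otimes\cO_2$ needs $\Ext^w(A\otimes\cO_2)$ to be a group and the K\"unneth formula to apply, neither of which follows from the hypothesis; more seriously, even granting $K_*(A\otimes\cO_2)=0$, the line ``$\beta$ carries no Fredholm--index obstruction, and running the argument of this paragraph for $\beta$ shows $\beta$ is itself a trivial extension'' is circular: you land at $[\beta]_w=0$ and face exactly the weak--versus--split problem you set out to solve, now for $\beta$ in place of $\id_A$. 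Your closing parenthetical concedes that the general case is not handled.

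In short, the core of your converse matches the paper's two--line proof, and you have put your finger on a genuine subtlety that the paper glosses over, but your elaboration does not resolve it. Note that the applications (Theorems~\ref{T1} and~\ref{T2}) do not need the full ``only if'': it suffices that a genuinely split extension has $\cO_2$ in its relative commutant (your forward direction) and that any extension with $\cO_2$ in its relative commutant has vanishing $\Ext^w$--class (the uncontroversial first half of your converse), since the CAR algebra has extensions with nonzero $\Ext^w$--class.
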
  

\begin{proof} Assume first that $A$ is split.
Recall that by Voiculescu's theorem (\cite{voiculescu1976non}, \cite[Section 4]{Arv:Notes}), all trivial extensions of $A$ are equivalent. In particular, $\id_A$ is equivalent to $\id_A \oplus \id_A \colon A \to Q \otimes M_2$.
Thus,  
if $A$ is split then there is a  projection $p\in A'\cap Q$ such that both $pAp$ and $(1-p)A(1-p)$ are split in $pQp$ and $(1-p)Q(1-p)$, 
respectively. Lemma~\ref{L1} implies that 1, $p$ and $1-p$ are Murray--von Neumann equivalent and therefore 
 $\cO_2$ embeds unitally into $A'\cap Q$.

Now assume  $\cO_2$ unitally embeds into $A'\cap Q$. Then the extension of $A$ corresponding to $1$ is an idempotent in $\Ext(A)$. 
Since the identity is the only idempotent in a group,~$A$ is split. 
\end{proof}

Let $A$ denote the CAR algebra, $M_{2^\infty}$. $A$ is singly generated by~\cite{OlsZam}. Fix a generator $g$ for $A$. 
Since $A$ is nuclear, $\Ext^w(A)$ is a group.

\begin{lemma}\label{L.CAR} 
For every unital extension $\pi$ of $A$, the type of  $\pi(g)$ in $Q$ is the same as the  type of $\pi_0(g)$  
corresponding to the trivial extension $\pi_0$ of $A$.
\end{lemma}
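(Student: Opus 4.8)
The plan is to exploit the fact that $A=M_{2^\infty}$ is the closure of an increasing union $\bigcup_n M_{2^n}$ of full matrix algebras, together with the uniqueness up to unitary conjugacy of unital copies of a matrix algebra inside $Q$. Since the type of a single element is just the collection of conditions $\phi(x)\le r$ with $\phi$ a formula in one variable, I would fix such a $\phi$ and aim to prove $\phi(\pi(g))^Q=\phi(\pi_0(g))^Q$; this automatically yields equality of the full types (and in particular of the quantifier-free types).

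First I would reduce to the finite levels. Using the standard presentation $A=\overline{\bigcup_n M_{2^n}}$, pick $h_n\in M_{2^n}$ with $h_n\to g$. Because $\pi$ and $\pi_0$ are contractive and the interpretation of any formula in a \cstar-algebra is a uniformly continuous function of its arguments, with modulus depending only on the formula, one gets $\phi(\pi(g))^Q=\lim_n\phi(\pi(h_n))^Q$ and the same with $\pi_0$ in place of $\pi$. So it suffices to show $\phi(\pi(h_n))^Q=\phi(\pi_0(h_n))^Q$ for each fixed $n$. Fixing matrix units $(e_{ij})_{i,j}$ for $M_{2^n}$ and writing $h_n=\sum_{i,j}\lambda_{ij}e_{ij}$, substitution of the term $\sum_{i,j}\lambda_{ij}x_{ij}$ into $\phi$ turns $\phi(\pi(h_n))^Q$ into the value of a fixed formula $\Psi(\bar x)$ at the tuple $(\pi(e_{ij}))_{i,j}$; hence it depends only on the type of that tuple in $Q$, and similarly for $\pi_0$. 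The problem is thereby reduced to showing that the tuples $(\pi(e_{ij}))_{i,j}$ and $(\pi_0(e_{ij}))_{i,j}$ have the same type in $Q$.

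This last point I would settle by producing a unitary $u\in Q$ with $u\pi(e_{ij})u^*=\pi_0(e_{ij})$ for all $i,j$, after which invariance of types under inner automorphisms finishes the proof. To get $u$: the corners $\pi(e_{11})$ and $\pi_0(e_{11})$ are nonzero projections of $Q$, so they lift to infinite-rank projections of $B(H)$, which are Murray--von Neumann equivalent there and hence in $Q$; conjugating such an equivalence through the two systems of matrix units yields $u$ in the usual way. In other words, the lemma comes down to the elementary fact that any two unital embeddings of a fixed matrix algebra into $Q$ are unitarily conjugate.

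The step that needs the most care is the passage from the finite levels to $g$ in the second paragraph: it rests on the uniform continuity of formula interpretations, and one should keep track of the domain of quantification of $x$, working with a fixed bound $m\ge\|g\|$ (which also bounds $\|h_n\|$ for large $n$) so that the whole argument takes place inside a single domain. It is worth noting that this argument does not use the group structure of $\Ext^w(A)$; that enters the paper only through the applications of this lemma. The same proof in fact shows that $\pi$ and $\pi_0$ agree on the type of every finite tuple drawn from $A$.
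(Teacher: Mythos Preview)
Your argument is correct and follows essentially the same route as the paper: approximate $g$ by elements $h_n\in M_{2^n}$, use continuity of formula interpretations to pass to the limit, and at each finite stage invoke the unitary conjugacy of any two unital copies of $M_{2^n}$ in $Q$. The paper phrases this last step as ``$\Ext^w(M_{2^n}(\bbC))$ is trivial,'' whereas you unpack it into the elementary lifting/Murray--von~Neumann argument (and the detour through matrix units is harmless but not needed, since the conjugating unitary already carries $\pi(h_n)$ to $\pi_0(h_n)$).
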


\begin{proof} Represent $A$ as a direct limit of $M_{2^n}(\bbC)$ and choose $a_n\in M_{2^n}(\bbC)$ 
such that $\lim_n a_n=g$. 
Fix a unital extension $\pi$ of $A$. For  $n\in \bbN$
 the group $\Ext^w(M_{2^n}(\bbC))$ is trivial, and therefore the type of
$\pi(a_n)$ in $Q$ is  the same as the type of $\pi_0(a_n)$ in $Q$. 
Fix a formula $\phi(x)$. Since the interpretation $b\mapsto \phi(b)^Q$ is continuous, we 
have 
\[
\phi(\pi(g))^Q=\lim_n \phi(\pi(a_n))^Q=\lim_n \phi(\pi_0(a_n))^Q=\phi(\pi_0(g))^Q. 
\]
Since $\phi$ was arbitrary, the conclusion follows. 
\end{proof}

\begin{proof}[Proof of Theorem~\ref{T1}] 
Fix a unital *-homomorphism $\pi$ of $A$ into $Q$ and consider the 2-type in $x_1, x_2$  consisting of conditions
\[
x_j\pi(g)=\pi(g)x_j, \qquad x_j^*x_j=1, \qquad x_1x_1^*+x_2x_2^*=1
\]
for $j=1,2$. 
By Lemma~\ref{L.O2}, this type is realized if and only if $\pi$ is the trivial extension.  

Since $A$ has both trivial and nontrivial extensions (as a matter of 
fact, $\Ext^w(A)$ is uncountable by \cite[Proposition~3]{Cho:Strong} or  \cite{PiPo:On}) and the type of $\pi(g)$ 
does not depend on the choice of the extension $\pi$ by Lemma~\ref{L.CAR}, 
$Q$ is not (countably) homogeneous. 
\end{proof}

The salient point in our proof of Theorem~\ref{T2} is the fact that the presence of $\cO_2$ in $A'\cap Q$ can be detected 
from $A'\cap U_0(Q)$. We note that in   \cite[Theorem~4.6]{paterson1983harmonic} it was shown that 
 if $B$ and $C$  are simple \cstar-algebras such that their unitary groups are isometrically isomorphic
 then this isomorphism extends to an isomorphism or an anti-isomorphism of $B$ and $C$. 
 We were not able, however, to use this result directly.

By Voiculescu's theorem (\cite{voiculescu1976non}) for a unital separable \cstar-subalgebra $A$ of $Q$ 
one has $(A'\cap Q)'=A$ and $Z(A'\cap Q)=Z(A)$. 
We need the following self-strengthening of this result. 
 
\begin{lemma} \label{L.center} 
If $A$ is a unital separable \cstar-subalgebra of $Q$ then $(A'\cap U_0(Q))'=A$
and $Z(A'\cap U_0(Q))=Z(A)\cap U_0(Q)$.
\end{lemma}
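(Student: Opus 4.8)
The plan is to reduce the two assertions about $A'\cap U_0(Q)$ to Voiculescu's theorem for $A'\cap Q$ by showing that unitaries in $A'\cap Q$ can be ``replaced'' by unitaries of Fredholm index zero without changing the relevant commutation data. First I would observe that $A'\cap U_0(Q)\subseteq A'\cap Q$, so the inclusions $A\subseteq (A'\cap U_0(Q))'$ and $Z(A)\cap U_0(Q)\subseteq Z(A'\cap U_0(Q))$ are immediate; the content is in the reverse inclusions. For the double commutant, suppose $b\in (A'\cap U_0(Q))'$; I want to conclude $b\in (A'\cap Q)'=A$. It suffices to show that $b$ commutes with every unitary $u\in A'\cap Q$, since such unitaries generate $A'\cap Q$ as a $\mathrm{C}^*$-algebra (a $\mathrm{C}^*$-algebra is the span of its unitaries). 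Given such a $u$, the key step is to produce a unitary $\tilde u\in A'\cap U_0(Q)$ with the same action on $b$ — or, more robustly, to write $u$ (or a small perturbation generating the same von Neumann-type data) as a product of index-zero unitaries in $A'\cap Q$.

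The mechanism I would use is the following. Since $A$ is unital and simple-enough to carry a copy of the compacts in its commutant — more precisely, since $A$ is a unital separable subalgebra of $Q$, by Voiculescu's theorem the identity representation of $A$ is approximately unitarily equivalent to (identity) $\oplus$ (trivial extension), so there is an isometry $s\in A'\cap Q$ with $s^*s=1$ and $1-ss^*\neq 0$; in fact one gets a unital copy of the Toeplitz algebra, hence a nonunital copy of $\cK$, inside $A'\cap Q$. Using a proper isometry $s\in A'\cap Q$ one can perform an Eilenberg-swindle/halving argument: any unitary $u\in A'\cap Q$ satisfies $u\oplus 1\sim 1\oplus 1$ in a suitable corner, and from this one produces unitaries $v,w\in A'\cap Q$, each of Fredholm index zero, with $u=vw$ (the standard fact that in a properly infinite corner every unitary is a product of two index-zero unitaries, applied inside $A'\cap Q$ using that $s$ makes the relevant corner properly infinite). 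Then $b$ commutes with $v$ and $w$, hence with $u$; as $u$ was an arbitrary unitary of $A'\cap Q$, we get $b\in (A'\cap Q)'=A$.

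For the center statement, take $c\in Z(A'\cap U_0(Q))$. The same product decomposition shows $c$ commutes with every unitary of $A'\cap Q$, hence $c\in Z(A'\cap Q)=Z(A)$ by Voiculescu's theorem; since also $c\in A'\cap U_0(Q)$ we get $c\in Z(A)\cap U_0(Q)$, giving the reverse inclusion. I expect the main obstacle to be the isometry/halving step: one must check carefully that the proper isometry supplied by Voiculescu's theorem lies in $A'\cap Q$ (not merely in $Q$), that the induced corner of $A'\cap Q$ is genuinely properly infinite, and that the two factors $v,w$ in $u=vw$ can be arranged to have index zero and to lie in the commutant rather than just in $Q$. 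Once that is in place, everything else is bookkeeping and an appeal to the already-cited consequences of Voiculescu's theorem.
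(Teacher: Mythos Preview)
Your approach has a genuine gap at the decomposition step. The Fredholm index is a group homomorphism $U(Q)\to\mathbb{Z}$ whose kernel is exactly $U_0(Q)$, so if $u\in A'\cap Q$ is a unitary with nonzero Fredholm index then it \emph{cannot} be written as $u=vw$ with $v,w\in U_0(Q)$, however the factorisation is arranged inside $A'\cap Q$. And such unitaries do occur: already for $A=\mathbb{C}\cdot 1$ one has $A'\cap Q=Q$, which contains unitaries of every index. No Eilenberg swindle or proper-infiniteness argument can circumvent a group homomorphism to $\mathbb{Z}$; the obstacle you flag at the end (arranging both factors to have index zero) is therefore not a technicality but an impossibility.

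The paper's argument sidesteps the issue by working with self-adjoint elements rather than arbitrary unitaries. If $b\notin A=(A'\cap Q)'$, choose $x\in A'\cap Q$ with $xb\neq bx$; replacing $x$ by its real or imaginary part one may take $x=x^*$, and after rescaling, $\|x\|<\pi$. Then $u=\exp(ix)$ lies in $A'\cap U_0(Q)$ (an exponential is connected to $1$), and since $x\in\mathrm{C}^*(u)$ one has $ub\neq bu$. Your reduction ``it suffices that $b$ commute with every unitary of $A'\cap Q$'' is correct but stronger than needed: self-adjoints already span $A'\cap Q$, and those are precisely the elements that exponentiate into $U_0(Q)$ for free. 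The second assertion then follows from the first exactly as you outline.
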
 

\begin{proof} 
Assume $b\in Q$ is such that $b\notin A$. Since $A = (A' \cap Q)'$, there exists an element $x \in A' \cap Q$ such that $xb \neq bx$. By replacing $x$ by its real or imaginary part, we may assume that $x$ is self-adjoint, and we may assume that $\|x\|<\pi$. Set $u=\exp(ix)$. Then $u \in A' \cap U_0(Q)$ and since $x \in \mathrm{C}^*(u)$, we have $ub \neq bu$.
Therefore $b\notin (A'\cap U_0(Q))'$. 
Since $A= (A'\cap Q)'$ and $b$ was arbitrary, this proves $(A'\cap U_0(Q))'=A$. 

The second equality is a standard consequence of the first. 
If $b\in Z(A'\cap U_0(Q))$, then by the above
$b\in A$ and therefore $b\in Z(A)$. Since $Z(A)\subseteq A'\cap Q$, the conclusion follows. 
\end{proof}

\begin{lemma} \label{L.O2.x} 
The Cuntz algebra $\cO_2$ is the universal \cstar-algebra generated by three unitaries $u$, $v$ and $w$ satisfying the following relations: 
 \begin{enumerate}
 \item \label{L.O2.1} $u^2=v^3=w^6=1$.
\item $\|w-1\|=1$.
\item $uw^3u=-w^3$. 
\item \label{L.O2.4} $vw^2v^2=e^{2\pi i/3} w^2$.
\pushcounter
 \end{enumerate}
\end{lemma}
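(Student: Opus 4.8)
The plan is to identify the universal \cstar-algebra $D$ generated by unitaries $u,v,w$ subject to (1)--(4) with $\cO_2$. First one checks that $D$ makes sense: in every nonzero quotient, relation (3) forces the spectral projection of the selfadjoint unitary $w^3$ at the eigenvalue $1$ to be nonzero and proper (otherwise $w^3=\pm 1$ and (3) gives $1=-1$), so there, together with (1), the inequality $\|w-1\|\le 1$ already implies $\|w-1\|=1$; thus (2) may be replaced by the admissible relation $\|w-1\|\le1$. To see the relations are consistent---so $D\neq0$---exhibit $u_0,v_0,w_0$ in $M_2\otimes\cO_2\cong M_2(\cO_2)$: with $s_1,s_2$ the canonical generators of $\cO_2$ and $\zeta=e^{2\pi i/3}$, set $e_0=e_{11}\otimes1$, $u_0=(e_{12}+e_{21})\otimes1$, $v_0=e_{21}\otimes s_1+e_{22}\otimes s_2s_1^*+e_{12}\otimes s_2^*$, and $w_0=e_0-\zeta^2 v_0^*e_0v_0-\zeta v_0e_0v_0^*$; a direct computation verifies (1)--(4) for $(u_0,v_0,w_0)$ and moreover gives $e_0u_0v_0e_0=e_{11}\otimes s_1$ and $e_0u_0v_0^2e_0=e_{11}\otimes s_2$, which previews the argument below.

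Next I would extract the internal structure of $D$. The unitary $w^3$ is a symmetry, $w^2$ has order $3$, and they commute; since $w^6=1$ and $\|w-1\|=1$ we get $\Sp(w)\subseteq\{1,e^{i\pi/3},e^{-i\pi/3}\}$, and on this set $z\mapsto z^3$ and $z\mapsto z^2$ both equal $1$ only at $z=1$. Hence the spectral projection $e:=\tfrac12(1+w^3)$ of $w^3$ at $1$ equals the spectral projection $q_1:=\tfrac13(1+w^2+w^4)$ of $w^2$ at $1$. Relation (3) now reads $ueu=1-e$, so (using $D\neq0$) $0\neq e\neq1$ and $C^*(e,u)$ is a copy of $M_2$ with $e$ a minimal projection. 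Using $v^3=1$, relation (4) reads $vw^2v^{-1}=\zeta w^2$, so conjugation by $v$ cyclically permutes the three spectral projections of $w^2$; one identifies these as $e$, $v^*ev$ and $vev^*$, so that $e+v^*ev+vev^*=1$, $eve=ev^2e=0$, $w^2=e+\zeta v^*ev+\zeta^2 vev^*$, and consequently $w=w^3(w^2)^{-1}=e-\zeta^2v^*ev-\zeta vev^*\in C^*(e,v)$. Therefore $D=C^*(e,u,v)$.

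Then I would work in the corner $eDe$ and set $S_1:=euve$ and $S_2:=euv^2e$. Using only $ueu=1-e$, $v^3=1$, $eve=ev^2e=0$, and $e+v^*ev+vev^*=1$, one checks that $S_1,S_2\in eDe$, that $S_j^*S_j=e$ for $j=1,2$, and that $S_1S_1^*+S_2S_2^*=e$; hence there is a unital $*$-homomorphism $\cO_2\to eDe$ taking the canonical generators to $S_1,S_2$, and it is injective because $\cO_2$ is simple and $e\neq 0$. The same relations give $uS_1=ve$, $uS_2=v^2e$, $ev=(uS_2)^*$, and $v^2ev^*=(v^2e)(ve)^*=(uS_2)(uS_1)^*$, whence
\[
v=v(e+v^*ev+vev^*)=ve+ev+v^2ev^*=uS_1+(uS_2)^*+uS_2(uS_1)^*\in C^*(e,u,S_1,S_2),
\]
so $D=C^*(e,u,S_1,S_2)$. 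Since $C^*(e,u)\cong M_2$ with $e$ minimal and $S_1,S_2\in eDe$, the standard matrix-unit isomorphism $D\cong M_2\otimes eDe$ identifies $D$ with $M_2\otimes C^*(S_1,S_2)$ and $eDe$ with $C^*(S_1,S_2)$; and $C^*(S_1,S_2)$, being a nonzero quotient of the simple algebra $\cO_2$, equals $\cO_2$. Therefore $D\cong M_2\otimes\cO_2\cong\cO_2$.

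The one substantive point is the structural identity $e=q_1$: this is the sole place where (2) enters, and it is what welds the $M_2$ generated by $u,w^3$ to the $M_3$ generated by $v,w^2$, forcing $0<e<1$ and hence infiniteness. Everything afterward is routine---if slightly fiddly---bookkeeping: making $S_1,S_2$ into Cuntz isometries, recovering $v$ from $e,u,S_1,S_2$, and unraveling $eDe$ as $\cO_2$.
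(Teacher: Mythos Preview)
Your argument is correct. The key structural step---using $w^6=1$ and $\|w-1\|=1$ to force $\Sp(w)\subseteq\{1,e^{\pm i\pi/3}\}$, so that the spectral projection $e=\tfrac12(1+w^3)$ coincides with the spectral projection of $w^2$ at $1$, whence $ueu=1-e$ and $e+v^*ev+vev^*=1$---is exactly what the paper isolates too. From that point the two proofs diverge. The paper simply observes that $u,v,p:=e$ now satisfy Choi's presentation of $\cO_2$ from \cite{choi1979simple} (namely $u^2=v^3=1$, $p+upu=1$, $p+vpv^2+v^2pv=1$) and that $w$ is recoverable from $p$ and $v$, so the two presentations are equivalent and Choi's theorem finishes the job. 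You instead bypass Choi's theorem entirely: you build Cuntz isometries $S_1=euve$, $S_2=euv^2e$ in the corner $eDe$, recover $v$ from $e,u,S_1,S_2$, and conclude $D\cong M_2\otimes eDe\cong M_2\otimes\cO_2\cong\cO_2$. Your route is self-contained (modulo the elementary fact $M_2\otimes\cO_2\cong\cO_2$) and in effect reproves Choi's theorem along the way; the paper's route is shorter because it cites that theorem. Your opening discussion of why the norm equality (2) may be replaced by the admissible inequality $\|w-1\|\le 1$ is a point the paper leaves implicit.
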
 

\begin{proof} 
In \cite[Theorem~2.6]{choi1979simple} Choi proved that every \cstar-algebra  generated by 
  unitaries $u$ and $v$ and projection $p$ satisfying the following conditions is isomorphic to $\cO_2$:
 \begin{enumerate}
 \popcounter
 \item $u^2=v^3=1$. 
 \item $p+upu=1$ and $p+vpv^2+v^2pv=1$. 
 \pushcounter
 \end{enumerate}
Denote $\gamma=e^{2\pi i/6}$. It is straightforward to check that such $u$ and $v$, together with 
 \[
 w=p+\gamma vpv^2+\gamma^5 v^2pv, 
 \]
satisfy our conditions. 
It will therefore suffice to prove that our conditions imply
\begin{enumerate}
\popcounter
\item $w=p+\gamma q+ \gamma^5 r$, with 
 projections $p,q,r$ satisfying $p+q+r=1$. 
 \item $upu+p=1$ and $p+vpv^2+v^2pv=1$. 
\pushcounter
 \end{enumerate}
 Since $w^6=1$ and $\|w-1\|=1=|e^{2\pi i/6}-1|$,
$\Sp(w)$ is contained in $\{\gamma, 1, \gamma^5\}$, with at least one of $\gamma$ or  $\gamma^5$ belonging to it.  
By \eqref{L.O2.4}, the  unitaries $w^2$ and $\gamma^2 w^2$ are conjugate and 
 $\Sp(w^2)=\{\gamma^2\lambda: \lambda\in \Sp(w^2)\}$. 
Therefore $\Sp(w)=\{\gamma, 1, \gamma^5\}$, and we can write 
\begin{enumerate}
\popcounter
\item \label{L.O2.n} $w=p+\gamma q +\gamma^5 r$
\end{enumerate}
for projections $p,q,r$ satisfying $p+q+r=1$. 
By applying \eqref{L.O2.4} to \eqref{L.O2.n} we obtain $vpv^2=q$ and $vqv^2=r$. 
 In particular, $ p+vpv^2+v^2pv=1$.

 Since $uw^3u=-w^3$, $u$ and $w^3$ generate  a unital copy of $M_2(\bbC)$ and
$p=(w^3+1)/2$ (the equality follows by  \eqref{L.O2.n}) is a projection such that $p+upu=1$. 

Thus $u, v$ and $p=(w^3+1)/2$  satisfy Choi's conditions and the proof is complete. 
\end{proof} 

\begin{lemma} \label{L.type} There is a 4-type $\bfs(\bar x)$ in the language of metric groups such that 
for a unital \cstar-algebra $A$ and a closed group $G$ satisfying $U_0(A)\subseteq G\subseteq A$ 
and $Z(G)=\bbT$ the following are equivalent. 
\begin{enumerate}
\item $\bfs$ is realized in $G$. 
\item   $A$ has a unital subalgebra isomorphic to $\cO_2$ whose unitary group is included in $G$. 
\pushcounter
\end{enumerate}
\end{lemma}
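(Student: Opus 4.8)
The plan is to translate the generators-and-relations description of $\cO_2$ given in Lemma~\ref{L.O2.x} into the language of metric groups. Three of the four variables, $x_1,x_2,x_3$, will stand for the three unitary generators $u,v,w$, and the fourth variable $x_4$ will be forced to be a primitive sixth root of unity $\gamma=e^{2\pi i/6}\cdot 1$, so that the two scalars occurring in Lemma~\ref{L.O2.x}, namely $-1=\gamma^3$ and $e^{2\pi i/3}=\gamma^2$, become available as the group terms $x_4^3$ and $x_4^2$. To pin $x_4$ down I will exploit the hypothesis $Z(G)=\bbT$: a central element of $G$ must be a scalar, and a scalar unitary $\zeta$ is a primitive sixth root of unity exactly when $d(\zeta^3,1)=2$ and $d(\zeta^2,1)=\sqrt 3$. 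What makes the translation work is that, modulo $x_1^2=x_2^3=1$, $x_3^6=1$ and the centrality of $x_4$, relations (3) and (4) of Lemma~\ref{L.O2.x} become genuine group equations: multiplying $uw^3u=-w^3$ by $w^3$ and using $w^6=1$ gives $uw^3uw^3=-1$, and multiplying $vw^2v^2=e^{2\pi i/3}w^2$ by $w^{-2}$ and using $v^2=v^{-1}$ gives $vw^2v^{-1}w^{-2}=e^{2\pi i/3}$.

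Accordingly, I will take $\bfs(x_1,x_2,x_3,x_4)$ to be the type consisting of the conditions
\begin{gather*}
x_1^2=1,\quad x_2^3=1,\quad x_3^6=1,\quad d(x_3,1)=1,\\
\sup_y d(x_4y,yx_4)=0,\quad d(x_4^3,1)=2,\quad d(x_4^2,1)=\sqrt 3,\\
x_1x_3^3x_1x_3^3=x_4^3,\quad x_2x_3^2x_2^{-1}x_3^{-2}=x_4^2
\end{gather*}
where $d$ is the metric of $G$ (of diameter at most $2$) and $t=s$ abbreviates $d(t,s)=0$; all of these are conditions in the language of metric groups, with $\sup_y$ ranging over $G$. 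For (2)$\Rightarrow$(1): if $B\subseteq A$ is a unital copy of $\cO_2$ with $U(B)\subseteq G$, pick unitaries $u,v,w\in B\subseteq G$ as in Lemma~\ref{L.O2.x} and set $x_4=\gamma=e^{2\pi i/6}\cdot 1_A\in\bbT\subseteq G$; reading off the defining relations of Lemma~\ref{L.O2.x} (and using $w^6=1$, $v^2=v^{-1}$ for the last two conditions) shows that $(u,v,w,\gamma)$ realizes $\bfs$.

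For (1)$\Rightarrow$(2): let $(a_1,a_2,a_3,a_4)\in G^4$ realize $\bfs$. The fifth condition forces $a_4\in Z(G)=\bbT$, and then the sixth and seventh force $a_4^3=-1$ and $a_4^2\in\{e^{2\pi i/3},e^{-2\pi i/3}\}$, so $a_4$ is a primitive sixth root of unity. Together with $a_1^2=a_2^3=a_3^6=1$, the last two conditions now say precisely that $a_1,a_2,a_3$ satisfy relations (1)--(4) of Lemma~\ref{L.O2.x}, possibly with $e^{-2\pi i/3}$ in place of $e^{2\pi i/3}$ in relation (4); this last point is harmless, since replacing $a_2$ by $a_2^{-1}$ leaves $\mathrm{C}^*(a_1,a_2,a_3)$ unchanged and turns $e^{-2\pi i/3}$ back into $e^{2\pi i/3}$. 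By Lemma~\ref{L.O2.x} and simplicity of $\cO_2$, the nonzero algebra $B:=\mathrm{C}^*(a_1,a_2,a_3)\subseteq A$ is a unital copy of $\cO_2$; and since $K_1(\cO_2)=0$ and $\cO_2$ is a simple purely infinite \cstar-algebra, $U(\cO_2)$ is connected, so $U(B)=U_0(B)\subseteq U_0(A)\subseteq G$, which gives (2). The only real content is the reorganization in the first paragraph — arranging that no scalars survive except as powers of the central variable $x_4$ — and this last step, where both $U_0(A)\subseteq G$ and the connectedness of $U(\cO_2)$ are needed to get the whole unitary group of the copy of $\cO_2$ into $G$.
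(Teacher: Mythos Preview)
Your proof is correct and follows essentially the same approach as the paper's: encode the unitary presentation of $\cO_2$ from Lemma~\ref{L.O2.x} via a central fourth variable forced into $\{\gamma,\gamma^5\}$, then invoke connectedness of $U(\cO_2)$ to obtain $U(B)\subseteq U_0(A)\subseteq G$. The only cosmetic differences are that the paper pins down $x_4$ with the single condition $d(x_4,1)=1$ rather than your pair of conditions on $x_4^2$ and $x_4^3$, and resolves the $x_4=\gamma^5$ ambiguity by replacing $x_3$ with $x_3^{-1}$ rather than $x_2$ with $x_2^{-1}$.
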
 

\begin{proof} 
Define a type $\bfs(\bar x)$ consisting of the following conditions: 
 \begin{enumerate}
 \popcounter
 \item $x_1^2=x_2^3=x_3^6=1$. 
\item\label{I.central}  $\sup_y \|x_4yx_4^{-1} y^{-1}-1\|=0$
\item \label{I.gamma} $\|x_4-1\|=1$
\item\label{I.x4}  $\|x_3-1\|=1$.
\item $x_1x_3^3x_1=-x_3^3$. 
\item $x_2x_3^2x_2^2=x_4^2 x_3^2$. 
\end{enumerate}
Observe that $x_4$ satisfies condition \eqref{I.central} if and only if $x_4\in Z(G)$. Write $\gamma=e^{2\pi i/6}$, and note that 
$\gamma$ and $\gamma^5$ are the only elements of $Z(A)=\bbT$ at the distance exactly 1 from the 
identity. Therefore     \eqref{I.gamma} implies that $x_4=\gamma \cdot 1$ or $x_4=\gamma^5 \cdot 1$. 

If $x_4 = \gamma \cdot 1$, the remaining conditions are satisfied by $x_1, x_2$ and $x_3$ in $G$ if and only if they 
satisfy the assumptions of Lemma~\ref{L.O2.x}. If $x_4 = \gamma^5 \cdot 1$, then $x_1,x_2$ and $x_3^{-1}$ satisfy those conditions. 
Either way, we see that if $G$ realizes the type then by Lemma~\ref{L.O2.x} there exists a unital copy of $\cO_2$ in $A$. 

Since every unitary in $\cO_2$ is of the form $\exp(ia)$ for a self-adjoint $a$ (\cite{Phi:Exponential}),
its unitary group is connected. Therefore if $A$ and $G$ are as above then $A$ has a unital copy of~$\cO_2$ if and only 
if it has a unital copy of $\cO_2$ whose unitary group is included in $G$. By the above, this is equivalent to $\bfs$ being realized in $G$. 
\end{proof}

\begin{proof}[Proof of Theorem~\ref{T2}] 
As in the proof of Theorem~\ref{T1}, let $A$ denote the CAR algebra~$M_{2^\infty}$ and let $g_j$, for $j\in \bbN$, 
 be an enumeration of a dense subgroup of the unitary group~$U(A)$. 
  Fix a unital *-homomorphism $\pi\colon A\to Q$.  
 Since the unitary group of $A$ is connected, 
 we have $\pi(U(A))\subseteq U_0(Q)$ and $\pi(A)'\cap U_0(Q)=U(\pi(A))'\cap U_0(Q)$.

  As in Theorem~\ref{T1}, the type of $(\pi(g_j): j\in \omega)$ does not depend on the choice of $\pi$. 
  Since $Z(A)=\bbC$, Lemma~\ref{L.center} implies $Z(\pi(A)'\cap U_0(Q))=\bbT$.

 Let 4-type $\bfs^+(\bar x)$  
 consist of $\bfs(\bar x)$ as in Lemma~\ref{L.type} together with all conditions of the form 
 \[
 \|g_j x_k g_j^{-1}x_k^{-1}-1\|=0
 \]
 for $j\in \bbN$ and $1\leq k\leq 3$. Then $\bfs^+$ is realized in $U_0(Q)$ if and only if 
 $\bfs$ is realized in $U(\pi(A))'\cap U_0(Q)=\pi(A)'\cap U_0(Q)$. 
 Since the assumptions of  Lemma~\ref{L.type} are satisfied, $\bfs$ is realized in $U(\pi(A))'\cap U_0(Q)$ if and only 
 if $\cO_2$ unitally embeds into $\pi(A)'\cap Q$. 
 Since there are $\pi_1\colon A\to Q$ and $\pi_2\colon A\to Q$ such that $\pi_1(A)'\cap Q$ has a unital 
 copy of~$\cO_2$ and $\pi_2(A)'\cap Q$ does not, our proof is complete. 
 \end{proof} 
 
 We conclude with a remark on the role of the Continuum Hypothesis in the construction of a possible $K$-theory reversing automorphism of the Calkin algebra. 
 Woodin's~$\utilde\Sigma^2_1$ absoluteness theorem (see \cite{Wo:Beyond}) 
 implies that, under a suitable large cardinal assumption, the following holds. 
 If there exists a forcing extension in 
 which the Calkin algebra has a $K$-theory reversing 
 automorphism, then every forcing extension 
 in which the Continuum Hypothesis holds contains a $K$-theory reversing automorphism 
 of the Calkin algebra. This means that if the existence of a $K$-theory reversing automorphism of the Calkin algebra is consistent with ZFC, then it most likely follows from the Continuum Hypothesis. 

\subsection*{Acknowledgment} The results of this note were proved during first author's visit to the Ben Gurion University in May 2015. He would like to thank the Department of Mathematics, and the second author in particular, for their warm hospitality.  We would also like to thank the referee for suggesting several improvements. 
 
\bibliographystyle{amsplain}
\bibliography{Q-not-ctbly-bib}
\end{document}